\def\volno{0}\fi
\def\volyear{2017}\fi
\def\pagno{000--000}\fi
\newfont{\footsc}{cmcsc10 at 8truept}
\newfont{\footbf}{cmbx10 at 8truept}
\newfont{\footrm}{cmr10 at 10truept}
\renewcommand\paragraph{\@startsection{paragraph}{4}{\z@}
                                    {2ex \@plus.5ex \@minus.2ex}
                                    {-1em}
                                    {\normalfont\normalsize\bfseries}}
\renewcommand\subparagraph{\@startsection{subparagraph}{5}{\parindent}
                                       {2ex \@plus.5ex \@minus .2ex}
                                       {-1em}
                                      {\normalfont\normalsize\bfseries}}
\newlength{\BiblioSpacing}
\renewenvironment{thebibliography}[1]{
\begin{oldthebibliography}{#1}
\setlength{\parskip}{\BiblioSpacing}
\setlength{\itemsep}{\BiblioSpacing}
}
{
\end{oldthebibliography}
}
\def\abstractname{Abstract -}   
\def\abstract{\begin{adjustwidth}{1cm}{1cm} \par    \footnotesize \noindent {\bf \abstractname} 
\def\endabstract{ \end{adjustwidth} \smallskip }}
\newtheorem{theorem}{Theorem}[section]}
\newtheorem{proposition}[theorem]{Proposition}}
\newtheorem{definition}[theorem]{Definition}}
\newtheorem{lemma}[theorem]{Lemma}}
\newtheorem{corollary}[theorem]{Corollary}}
\newtheorem{example}[theorem]{Example}}
\renewcommand{\phi}{\varphi} 
\newcommand{\set}[1]{\left\{#1\right\}}
\newcommand{\org}{|[fill=orange!50]|}
\newcommand{\blue}{|[fill=blue!25]|}
\newcommand{\red}{|[fill=red!50]|}
\title{\Large\bf Antimagic Labelings of Forests
\thanks{This work was supported by an NSF  grant for undergraduate research, NSF DMS 1916494.}}
\author{\sc Johnny Sierra, Daphne Der-Fen Liu, and Jessica Toy}
\begin{document}
\setcounter{page}{1}
\maketitle
\thispagestyle{fancy}

\vskip 1.5em

\begin{abstract}
An antimagic labeling of a graph $G(V,E)$ is a bijection $f: E \to \{1,2, \dots, |E|\}$ so that $\sum_{e \in E(u)} f(e)  \neq \sum_{e \in E(v)} f(e)$ holds for all $u, v \in V(G)$ with $u \neq v$, where $E(v)$ is the set of edges incident to $v$.  We call $G$ antimagic if it admits an antimagic labeling. A forest is a graph without cycles; equivalently, every component of a forest is a tree. 
It was proved by Kaplan, Lev, and Roditty [2009], and by Liang, Wong, and Zhu [2014] that every tree with at most one vertex of degree-2 is antimagic. A major tool used in the proof is the zero-sum partition introduced  by Kaplan, Lev, and Roditty [2009]. In this article, we provide an algorithmic representation for the zero-sum partition method and apply this method to show that every forest with at most one vertex of degree-2 is also antimagic. 
\end{abstract}
 
\begin{keywords}
Antimagic labeling; antimagic graphs; trees; rooted trees; forests
\end{keywords}

\begin{MSC}
05C78; 05C05
\end{MSC}
\section{Introduction} 

The notion of magic graphs was motivated by magic squares. A magic square is an $n\times n$ array of integers $\{1,2,\dots, n^2\}$ so that each row, column, and the main and back-main diagonals of the square sum to the same value (see \Cref{fig:magic} for an example). In 1963, Sedl\'{a}\v{c}ek \cite{first} extended this concept to graphs by labeling the edges of a graph with numbers and defining the \textit{vertex-sum} of each vertex to be the total of the labels  assigned to the edges incident to that vertex. \Cref{fig:magic} shows how a magic square is used to create a \textit{magic  labeling} on a complete bipartite graph, where all the vertex-sums are identical.

An \textit{edge labeling} of a graph $G$ is a function $f$ that assigns to each edge of $G$ a positive integer. For a vertex $u \in V(G)$, denote $E(u)$ the set of edges incident to $u$.  The \textit{vertex-sum} of $u$ is defined as

\begin{equation}
\label{defn}
\phi_f(u)=\sum_{e\in E(u)} f(e). 
\end{equation} 

\begin{figure}[t]
    \centering
    \centering
\begin{tikzpicture}
 \matrix at (-3,0)[matrix of nodes, nodes={rectangle,draw}, minimum size = 7.5mm]
  {
    2 & 7 & 6\\
    9 & 5 & 1 \\
    4 & 3 & 8 \\
  };
\foreach \x in {0,2,4}
\foreach \y in {0,2,4}
{\draw (\y,-1) -- (\x,1);}
\foreach \x in {0,2,4}{
\draw[fill=black] (\x,-1) circle (2pt);
\draw[fill=black](\x,1) circle (2pt);}
\draw(0,0.5) node[left] {2};
\draw (0.5,0.5) node[left]{9};
\draw(0.8,0.7) node[right=-2mm] {4};
\draw (1.65, 0.7) node[left]{7};
\draw(2,0.5) node[right]{5};
\draw (2.4,0.7) node[right]{3};
\draw (3.7,0.9) node[left]{6};
\draw (3.5,0.5) node[right]{1};
\draw (4,0.5) node[right]{8};
\end{tikzpicture}
    \vspace{-12.5mm}
    \caption{A magic square along with a magic $K_{3,3}$ graph. Each row and column of the magic square sum to 15. Each vertex in $K_{3,3}$ has vertex-sum 15.}
    \label{fig:magic}
\end{figure}

An edge labeling $f$ is called {\it magic} if all vertices have the same vertex-sum (see \Cref{fig:magic}). Many variations of magic labelings have been studied (cf. \cite{Baca2019}). Among them, the {\it antimagic labeling} has been studied widely in recent decades. The definition is given below. For positive integers $a \leq b$, denote $[a,b]=\{a, a+1, \dots, b\}$.  
\begin{definition}
\label{def:antimagic}
Let $G=(V,E)$ be a graph with $m$ edges. A bijective function  $f:  E \to [1,m]$  is an {\bf antimagic labeling} for $G$ if $\phi_f(u)\neq \phi_f(v)$ for any two vertices $u\neq v$, where the vertex-sum $\phi_f(u)$ is defined in \Cref{defn}.  If $G$ admits such an antimagic labeling, then $G$ is said to be {\it antimagic}. 
\end{definition}

By \Cref{def:antimagic}, it is clear that $K_2$, the simple graph with only one edge and two vertices, is not antimagic. The notion of antimagic labelings was introduced by Hartsfield and Ringel in \cite{hartsfield2013pearls} in 1990, who conjectured that every connected graph other than $K_2$  is antimagic. Since then, this conjecture has been studied extensively. Many families of graphs are known to be antimagic, yet the conjecture remains open (cf. \cite{dense, Baca2019, regular2, regular, regular bipartite, caterpillars, consecutive}).  

A {\it tree} is a connected graph without cycles. For a graph $G$ and a vertex $v \in V(G)$, the {\it degree} of $v$, denoted by $\deg(v)$, is the number of edges incident to $v$. The following result was due to Kaplan, Lev, and Roditty \cite{KAPLAN20092010},  and Liang, Wong, and  Zhu \cite{LIANG20149}:  

\begin{theorem}[\cite{KAPLAN20092010, LIANG20149}]
\label{thm} 
A tree $T \neq K_2$ with at most one degree-2 vertex  is antimagic.
\end{theorem}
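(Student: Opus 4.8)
The plan is to reduce everything to the internal (non-leaf) vertices, using the fact that the degree hypothesis forces the tree to have very many leaves. Let $m=|E|$, let $\ell$ be the number of leaves and $i$ the number of internal vertices, so that $\ell+i=m+1$ and $\sum_{v}\deg(v)=2m$. Since at most one internal vertex has degree $2$ and all others have degree at least $3$, we get $2m\ge \ell+(3i-1)$; substituting $i=m+1-\ell$ and rearranging gives $\ell\ge (m+2)/2$, hence $i\le m/2$, and the internal vertices span a subtree with only $i-1\le m/2-1$ edges. This count is the whole point: each leaf $v$ satisfies $\phi_f(v)=f(e)$ for its unique pendant edge $e$, so the leaf vertex-sums are automatically pairwise distinct and automatically lie in $[1,m]$ for \emph{every} bijection $f$. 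It therefore suffices to build $f:E\to[1,m]$ so that every internal vertex has $\phi_f>m$ and the internal vertex-sums are pairwise distinct; then the internal sums are separated from one another and, lying above $m$, from all the leaf sums.

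To build such an $f$, I would root $T$ at an internal vertex $r$, so that every non-root vertex owns its parent edge and each edge is owned exactly once. I would then prescribe distinct target sums $t_u>m$ for the internal vertices $u$ and distribute the labels $[1,m]$ so as to realize $\phi_f(u)=t_u$, processing the internal subtree $T'$ in a bottom-up order so that each internal vertex, when treated, still has free (as-yet-unlabeled) incident edges on which to place labels realizing its target, while an edge shared between two internal vertices is fixed exactly once. The engine guaranteeing that such an allocation exists---using each label once and hitting all the prescribed distinct sums---is the zero-sum partition method of Kaplan, Lev, and Roditty, applied to the degree data of the internal vertices. Since all but at most one internal vertex have degree at least $3$, every vertex-sum we must hit (save possibly one) is a sum of at least three chosen labels, which gives the partition enough slack to place the targets as a strictly increasing run of values above $m$.

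The main obstacle, and the reason for the hypothesis, is the degree-$2$ vertex. Such a vertex is incident to only two edges, so its vertex-sum is a sum of just two elements of $[1,m]$, offering almost no flexibility: to force it above $m$ it must be handed at least one large label, and with only two summands it cannot be nudged to dodge collisions the way a degree-$\ge 3$ vertex can. A single degree-$2$ vertex can always be accommodated---treat it last, reserve one large label for it, and slot its two-term sum into a still-free value above $m$---whereas two or more such vertices would compete for the scarce large labels while simultaneously needing pairwise-distinct two-term sums compatible with the tree's shared edges and with the bijection constraint, which cannot be guaranteed in general. I would finish by checking the small base cases directly: the star $K_{1,m}$, whose single internal vertex has sum $\binom{m+1}{2}>m$, and the path $P_3$, the smallest tree carrying the one permitted degree-$2$ vertex.
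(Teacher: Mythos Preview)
Your counting and the overall shape of the plan---leaf sums are automatically distinct and at most $m$, so it suffices to make internal sums exceed $m$ and be pairwise distinct---are correct, and this is in fact what the zero-sum method delivers. The gap is in the mechanism. You propose to \emph{prescribe} target sums $t_u>m$ and then ``process the internal subtree bottom-up'' to realize them, invoking the zero-sum partition as an oracle that guarantees feasibility. But the zero-sum partition (\Cref{cor:part}) does not hit prescribed targets: what it provides is a partition of $[1,m]$ into blocks of prescribed \emph{sizes} $r_i\ge 2$, each summing to $0\pmod{m'}$ (where $m'=m+1$ if $m$ is even, $m'=m$ if $m$ is odd). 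There is no greedy or sequential allocation, and no control over the sums beyond their residue class; your sentence ``place the targets as a strictly increasing run of values above $m$'' has no counterpart in what the lemma actually says.

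The argument in the paper (following \cite{KAPLAN20092010,LIANG20149}, cf.\ \Cref{prop}) is shorter and purely modular. Root $T$ at the degree-$2$ vertex if one exists, otherwise at any internal vertex; then \emph{every} internal vertex has at least two outgoing edges, so \Cref{cor:part} applies with $r_i=|E^+(v_i)|$. Assigning the block $D_i$ to the outgoing edges of $v_i$ forces $\sum_{e\in E^+(v)}f(e)\equiv 0\pmod{m'}$ for all $v$, hence $\phi_f(v)\equiv f(e^v)\pmod{m'}$ for every non-root $v$ (leaf or internal alike) and $\phi_f(\text{root})\equiv 0$. Since the incoming-edge labels $f(e^v)$ are distinct elements of $[1,m]$, the vertex-sums are pairwise distinct modulo $m'$ and you are done (with one extra line when $m$ is odd to separate the root from the vertex whose incoming edge carries the label $m$). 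In particular, your handling of the degree-$2$ vertex---``treat it last, reserve a large label''---misses the actual reason the hypothesis is needed: a non-root degree-$2$ vertex would have only one outgoing edge, and the partition lemma requires every block to have size at least $2$. Rooting at that vertex is exactly how it is accommodated, and it is why only one such vertex can be absorbed by this method.
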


\noindent
A major tool used in proving \Cref{thm} is called the {\it zero-sum partition}. It was first devised by Kaplan, Lev, and Roditty in \cite{KAPLAN20092010}. In Section 2, we provide a step-by-step process of this  method. 

A {\it forest} is a graph without cycles. Thus,  every component of a forest is a tree, called a {\it component tree}. We aim to investigate antimagic labelings of forests by utilizing the zero-sum partition method of antimagic labelings for trees. The main result of this article is: 

\begin{theorem}
\label{thm:newresult}
A forest $F$ with at most one degree-2 vertex and without  $K_2$ component trees is antimagic.
\end{theorem}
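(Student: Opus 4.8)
The plan is to reduce to Theorem~\ref{thm} and then to merge the component labelings into a single global labeling produced by running the zero-sum partition method over the whole forest at once. Write $F = T_1 \cup T_2 \cup \cdots \cup T_k$ as the disjoint union of its component trees, with $m_i = |E(T_i)|$ and $m = \sum_i m_i$. Since $F$ has at most one degree-2 vertex, that vertex lies in a single component, so at most one $T_i$ contains a degree-2 vertex while every other component has all internal degrees at least $3$; as no component is $K_2$, Theorem~\ref{thm} applies to each $T_i$ individually. The difficulty is that an antimagic labeling of $F$ must be one bijection onto $[1,m]$ whose $|V(F)|$ vertex-sums are distinct \emph{across} components, and merely shifting each component onto a consecutive block of $[1,m]$ does not suffice: two copies of $K_{1,3}$ labeled by the blocks $\{1,2,3\}$ and $\{4,5,6\}$ produce the center-sum $6$ in the first copy and a leaf-sum $6$ in the second. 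Thus the label partition and the within-component labelings must be chosen together.

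First I would globalize the method. Root each $T_i$ and orient its edges away from the root, so that every non-root vertex owns its parent edge and every vertex $v$ owns the set $D(v)$ of its child edges; the multiset $\{|D(v)| : v \in V(F)\}$ is the down-degree sequence of the rooted forest. I would then fix a single linear order on \emph{all} vertices of $F$, pooling the components together and running roughly from the leaves upward, and aim to assign the labels $[1,m]$ so that the vertex-sums are strictly increasing along this order. Strict monotonicity along one global order forces all vertex-sums to be distinct, which is exactly the antimagic condition, and because it treats every component by the same rule, no cross-component collision can occur.

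The heart of the argument is to realize such a monotone sequence of vertex-sums, and here I would use the algorithmic zero-sum partition method of Section~2 as the label-allocation engine, feeding it the shared pool $[1,m]$ together with the down-degrees $|D(v)|$. The partition then carves $[1,m]$ into blocks, one per vertex, whose sizes match the down-degrees and whose block-sums (combined with the parent-edge labels inherited from the adjacent blocks) place each vertex-sum in its prescribed slot. The smallest sums are produced by the leaves of all components simultaneously, the larger sums by higher-degree vertices, and the degree hypothesis, all internal degrees at least $3$ apart from at most one degree-2 vertex, is precisely the condition the tree method already requires and is what lets the block-sums be spread far enough apart to keep the sequence increasing. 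The two-$K_{1,3}$ example is resolved in this way: the six labels become the six leaf-sums $1,\dots,6$, while the two centers receive triples summing to $7$ and $14$.

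The step I expect to be the main obstacle is verifying that the zero-sum partition can actually be carried out over the pooled instance: one must check that the hypotheses of the partition theorem, on the total sum, the number of parts, and the admissible part-sizes, survive the amalgamation of all the components' down-degree sequences while the target block-sums are constrained to a strictly increasing list. The two delicate points inside this are the ``seam'' at which one component's vertices give way to the next in the global order, and the placement of the unique degree-2 vertex, which is the only vertex the single-tree method must treat by hand. I would handle the exceptional vertex by slotting its component (and that vertex) last in the global order, so that the special case of Theorem~\ref{thm} is reused verbatim rather than re-proved. Once the partition is shown to exist, reading the edge labels off the blocks yields the required bijection and completes the proof.
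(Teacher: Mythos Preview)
Your plan has the right instinct---pool the labels $[1,m]$ and run the zero-sum partition once over the whole forest---but it leaves unexecuted the one step where the multi-component situation actually bites, and your setup makes that step harder than necessary. Corollary~\ref{cor:part} requires every part to have size at least $2$. If you root each $T_i$ separately and keep the components apart, you face a dilemma: root at an internal vertex and every root $w_i$ has $\phi(w_i)\equiv 0\pmod{m'}$, so all $k$ roots share the residue~$0$ with no mechanism in your outline to separate them; root at a leaf and each root has down-degree~$1$, so Corollary~\ref{cor:part} does not even apply to that block. Your talk of ``seams'' and a global linear order does not address either horn. The paper resolves this with a single move you are missing: root each $T_i$ at a leaf $w_i$ and then \emph{identify} all the $w_i$ into one vertex $w$, producing a single tree $T$ with root of degree $s\geq 2$. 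Now Corollary~\ref{cor:part} applies to $T$ directly, there is only one vertex in residue class~$0$, and after labeling $T$ one splits $w$ back into $w_1,\dots,w_s$; each $w_i$ is a leaf of $F$ with $\phi_g(w_i)=g(w_iw_i')$, and these are distinct labels, while $\phi_g(w_i')>\phi_g(w_i)$ because $w_i'$ picks up an extra multiple of $m'$ from its own children.

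Second, your target of ``strictly increasing vertex-sums along a global order'' is both more than the zero-sum partition delivers and more than you need. What the method actually gives is $\phi(v)\equiv f(e^v)\pmod{m'}$ for every non-root $v$; since distinct vertices have distinct incoming edges, that already yields pairwise distinct vertex-sums with no monotonicity required. Trying to force a monotone sequence would demand control over \emph{which} multiple of $m'$ each outgoing block realises, which is exactly the obstacle you flag but do not overcome. Drop the monotone framing, adopt the root-identification trick, and the argument goes through along the paper's case analysis (parity of $m$, presence of the degree-$2$ vertex, and whether $s=2$ or $s\geq 3$).
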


The proof of \Cref{thm:newresult} is presented in Section 3. In Section 2, we introduce the zero-sum partition method that will be used in the proof of \Cref{thm:newresult}. In Section 4, we discuss possible directions and open problems for future study. 

\section{The Zero-Sum Partition Method}
The zero-sum partition method is based on the following two results. Here we present proofs that provide a step-by-step partition algorithm. 

\begin{lemma}
[\cite{KAPLAN20092010}]
\label{lem:zerosum}
Let $s,l$ be non-negative integers and let $k=2s+6l$. Then there is a partition of $[1, k]$ into subsets $Q_1, Q_2, \dots, Q_{s+2l}$ such that the following hold: 
\[
\begin{array}{llll} 
\mbox{For $i \in [1, l]$:}    &|Q_i|=3 \  \mbox{and} \ \sum\limits_{a\in Q_i}a=k+1. \\    
\mbox{For $i \in [1+l, s+l]$:} &|Q_i|=2 \ \mbox{and} \  \sum\limits_{a\in Q_i}a=k+1. \\
\mbox{For $i \in [s+l+1, s+2l]$:} &|Q_i|=3 \ \mbox{and} \ \sum\limits_{a\in Q_i}a=2(k+1).
\end{array}
\hspace{2in}
\]
\end{lemma}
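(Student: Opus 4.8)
The plan is to build the entire partition from a single fixed-point-free involution on the ground set. Since $k = 2s+6l$ is even, define $\iota\colon [1,k]\to[1,k]$ by $\iota(x)=k+1-x$; it pairs each element with its complement and has no fixed point. The first thing I would record is the arithmetic sanity check: the prescribed blocks account for $3l + 2s + 3l = 2s+6l = k$ elements, and their sums total $(l+s)(k+1) + l\cdot 2(k+1) = (s+3l)(k+1) = \tfrac{k}{2}(k+1) = \sum_{x=1}^{k}x$, so the requested sums are at least globally consistent. The key structural observation is that the three block types are not independent: if $Q=\{a,b,c\}$ is any triple of distinct elements of $[1,k]$ with $a+b+c=k+1$, then $\iota(Q)=\{k+1-a,\,k+1-b,\,k+1-c\}$ consists of distinct elements summing to $3(k+1)-(k+1)=2(k+1)$, so $\iota$ carries a sum-$(k+1)$ triple to a sum-$2(k+1)$ triple. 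Moreover no two elements of a positive sum-$(k+1)$ triple can themselves sum to $k+1$ (the third element would be $0$), so $Q\cap\iota(Q)=\emptyset$ automatically.

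Consequently the whole lemma reduces to one sub-problem: find pairwise disjoint triples $Q_1,\dots,Q_l\subseteq[1,k]$, each with $\sum_{a\in Q_i}a=k+1$, whose union $S=\bigcup_i Q_i$ contains no complementary pair, i.e.\ $S\cap\iota(S)=\emptyset$. Granting this, I would take the sum-$(k+1)$ triples of the statement to be $Q_1,\dots,Q_l$ and the sum-$2(k+1)$ triples to be $\iota(Q_1),\dots,\iota(Q_l)$; these occupy the $\iota$-closed set $S\cup\iota(S)$ of size $6l$. The remaining $k-6l=2s$ elements again form an $\iota$-closed set with no fixed point, hence split into exactly $s$ complementary pairs $\{x,\,k+1-x\}$, each summing to $k+1$, which serve as the $s$ size-$2$ blocks. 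In particular the case $l=0$ is immediate: the complementary pairing $\{i,\,k+1-i\}$ for $1\le i\le s$ is the required partition.

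The real work, and the step I expect to be the main obstacle, is the explicit construction of the transversal $S$ and its decomposition into sum-$(k+1)$ triples. Since $S$ must meet each complementary pair in at most one element while having exactly $3l$ elements drawn from the $\tfrac{k}{2}=s+3l$ pairs, it is forced to be a system of representatives of $3l$ of those pairs with total $l(k+1)$; I would produce it as an explicit $l$-indexed family in which each triple has the shape (one small element, two large elements), the two large elements being controlled by their deficits below $k$. The difficulty is uniformity in the parameters. When $s$ is large the two halves $[1,\tfrac{k}{2}]$ and $[\tfrac{k}{2}+1,k]$ are well separated and a routine indexed choice works, but in the ``triple-heavy'' regime where $s$ is small (in particular $s=0$, where a sum-$(k+1)$ triple cannot lie in the lower half at all) the chosen representatives are forced to interleave, and one must simultaneously keep the $l$ triples disjoint, keep each of their sums equal to $k+1$, and never select both members of a complementary pair. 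I expect this to require a short case analysis on the size of $s$ relative to $l$ together with a parity discussion — small cases already show that the naive ``largest-equals-sum-of-the-other-two'' grouping is obstructed modulo $2$ — or alternatively an induction on $l$ that peels off one triple together with its $\iota$-image. Verifying that the explicit family meets all three constraints in every regime is the crux; once that is in hand the remaining bookkeeping is routine.
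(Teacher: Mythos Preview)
Your reduction via the involution $\iota(x)=k+1-x$ is valid and elegant: once one has $l$ pairwise disjoint sum-$(k+1)$ triples whose union $S$ satisfies $S\cap\iota(S)=\emptyset$, applying $\iota$ manufactures the sum-$2(k+1)$ triples and the leftover $2s$ elements fall into complementary pairs automatically. The gap is that you stop exactly at the point you yourself identify as the crux: you never exhibit the family $Q_1,\dots,Q_l$. Your suspicion that this step may require a case split on $s$ versus $l$ or a parity argument is not paranoia---the obvious uniform candidates fail. For instance $Q_i=\{i,\,l+i,\,k-l-2i+1\}$ has $\iota(k-l-1)=l+2\in Q_2$ whenever $l\ge 2$, and $Q_i=\{i,\,2l+i,\,k-2l-2i+1\}$ is not even disjoint when $s=0,\ l=2$. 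So as written the proposal is a correct reduction with the constructive heart missing.

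The paper's proof is genuinely different and sidesteps precisely this difficulty. It arranges $[1,k]$ as a $2\times(k/2)$ array with columns summing to $k+1$, fixes the middle $s$ columns as the $B$-sets, and then writes down closed formulae $A_i=\{i,\ s+l+i,\ k-s-l-2i+1\}$ and $C_i=\{k+1-i,\ s+3l+1-i,\ k-s-3l+2i\}$ for the two kinds of triples. Crucially, the paper's $C$-sets are \emph{not} the $\iota$-images of its $A$-sets: already for $l\ge 2$ the union $\bigcup_i A_i$ contains the complementary pair $\{s+l+2,\ s+5l-1\}$, so the $\iota$-symmetry you rely on is absent from the paper's construction. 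What the paper gains by abandoning that symmetry is uniformity: a single set of formulae covers all $s,l\ge 0$ with no case analysis, and verifying that the pieces tile $[1,k]$ is a mechanical index check. Your route, if the transversal family were supplied, would give a more conceptual explanation of why the partition exists; the paper's route trades that for an explicit one-line answer.
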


\begin{proof} 
The proof in \cite{KAPLAN20092010} provides formulas of sets $Q_i$ directly. To assist in understanding, we provide a step-by-step method to obtain these formulas. 
In general, \Cref{lem:zerosum} partitions the set of numbers in $[1, k]$ ($k=2s+6l$)  into three types of sets, called  $A$-, $B$-, and $C$-sets, where each $A$-set has three elements with sum $k+1$, each $B$-set has two elements with sum $k+1$, and each $C$-set has three elements with sum $2(k+1)$. 
Precisely, there are $l$ $A$-sets (denoted as  $A_1, A_2, \dots, A_l$), $s$ $B$-sets (denoted as $B_{1}, B_{2}, \dots, B_{s}$), and $l$ $C$-sets (denoted as $C_{1}, C_{2}, \dots, C_{l}$). 

The strategy of getting these $A$-, $B$-, and $C$-sets is expressed  in the following four steps. Along the process, we use the following \Cref{example} to illustrate each step. 

\begin{example}
\label{example}
Suppose $s=5$ and $l=2$. Then $s+3l=11$ and $k=2(s+3l) =22$.  
\noindent
By \Cref{lem:zerosum}, we can partition the numbers in $[1, 22]$ into subsets $A_1,A_2,B_1, \dots ,B_5, C_1, C_2$.
\end{example}

\medskip
\noindent
{\bf Step 1: Arranging all the labels into a two-row matrix.} \ List the numbers in $[1,k]$ as a $2 \times (\frac{k}{2})$ matrix $M$ where the first row has numbers $1, 2, \dots, \frac{k}{2}$ in increasing order, and the second row has numbers $k, k-1, \dots, \frac{k}{2} + 1$ in decreasing order.  Precisely, $a_{1,i}=i$ and $a_{2,i} = k-i+1$, for $i \in[1,\frac{k}{2}]$. See \Cref{fig:step1} as an illustration for \Cref{example}. Consequently, the two numbers in each column sum up to $k+1$. Note that there are  $\frac{k}{2}=s+3l$ columns. 

\begin{figure}[h]
    \centering
    \begin{tikzpicture}
    \matrix  at (0,-2.75)
    [  
        matrix of math nodes, 
        nodes in empty cells,
        minimum width=width("8888"),
        left delimiter={[},right delimiter={]}
    ] (B)
    { 
        1  &  2 &  3 &  4 &  5 &  6 &  7 &  8 &  9 & 10 & 11\\
        22 & 21 & 20 & 19 & 18 & 17 & 16 & 15 & 14 & 13 & 12\\
    };

    \draw[decorate, decoration = {brace,raise=1.7pt},very thick]
    (B-1-1.north west) -- (B-1-2.north east) node[midway,above=2.75pt] {$l$}; 
    
    \draw[decorate, decoration = {brace,raise=1.7pt},very thick]
    (B-1-3.north west) -- (B-1-7.north east) node[midway,above=2.75pt] {$s$}; 
    
    \draw[decorate, decoration = {brace,raise=1.7pt},very thick]
    (B-1-8.north west) -- (B-1-9.north east) node[midway,above=2.75pt] {$l$}; 
    
    \draw[decorate, decoration = {brace,raise=1.7pt},very thick]
    (B-1-10.north west) -- (B-1-11.north east) node[midway,above=2.75pt] {$l$}; 
\end{tikzpicture}    
    \vspace*{-12.5mm}
    \caption{Arrange $[1,22]$ into a two-row matrix $M$. Group the columns as shown. In total, there are $s+3l$ columns.}
    \label{fig:step1}
\end{figure}

\medskip
\noindent
{\bf Step 2: Determining the B-sets.} \  
Fix the columns of $[l+1, l+s]$ in $M$ as the $B$-sets. Precisely,  
$$
\mbox{{\bf B-sets}}: B_i = \{a_{1, l+i}, \ a_{2,l+i}\} = \{l+i, \ k-l-i+1\},  \ i \in [1, s]. 
$$
Note that the sum of each $B$-set is $k+1$.
See \Cref{fig:step2} for an illustration of Step 2 for \Cref{example}. 

\begin{figure}[h]
    \centering
    \begin{tikzpicture}
    \matrix  at (0,-5.25)
    [  
        matrix of math nodes, 
        nodes in empty cells,
        minimum width=width("8888"),
        left delimiter={[},right delimiter={]},
    ] (C)
    { 
        1  &  2 &  \blue3 &  \blue4 &  \blue5 &  \blue6 &  \blue7 &  8 &  9 & 10 & 11\\
        22 & 21 & \blue20 & \blue19 & \blue18 & \blue17 & \blue16 & 15 & 14 & 13 & 12\\
    };
    
    \draw[decorate, decoration = {brace,raise=1.7pt},very thick]
    (C-1-3.north west) -- (C-1-7.north east) node[midway,above=2.75pt] {$s$}; 
\end{tikzpicture}
    \vspace*{-12.5mm}
    \caption{Use these $s$ columns to create the $B$-sets. The $B$-sets are $B_1 = \{3,20\}, B_2=\{4,19\},  B_3=\{5,18\}, B_4=\{6,17\}$, and $B_5=\{7,16\}$.}
    \label{fig:step2}
\end{figure}

\medskip
\noindent
{\bf Step 3: Determining the $A$-sets.} \  We use the first $l$ elements on the first row of $M$ to create A-sets. Recall that each column $i \in [1,l]$ in $M$, the sum of the two elements is $k+1$, since $a_{1,i}+a_{2,i}=(i)+(k-i+1)=k+1$. 
We replace $a_{2,i}$ by two elements, $a_{1, l+s+i}$ and $a_{2, l+s+2i}$, since: 
$$
a_{1, l+s+i} +  a_{2, l+s+2i} = l+s+i + [ k-(l+s+2i)+1]= k-i+1=a_{2,i}.
$$
Hence, we obtain the $l$ $A$-sets as follows:  
\[
\mbox{{\bf $A$-sets}}: A_i =\set{a_{1,i}, \ a_{1,s+l+i}, \   a_{2,s+l+2i}}=\{i, \ l+s+i, \  k-(l+s+2i)+1\} ,\  i \in [1,l]. 
\]
See \Cref{fig:step3} for an illustration of this step for \Cref{example}. 

\begin{figure}[h]
    \centering
    \begin{tikzpicture}
        \matrix  at (0,-7.75)
    [  
        matrix of math nodes, 
        nodes in empty cells,
        minimum width=width("8888"),
        left delimiter={[},right delimiter={]},
    ] (D)
    { 
        \org1  & \org 2 &  \blue3 &  \blue4 &  \blue5 &  \blue6 &  \blue7 &  \org8 &  \org9 & 10 & 11\\
        22 & 21 & \blue20 & \blue19 & \blue18 & \blue17 & \blue16 & 15 & \org14 & 13 & \org12\\
    };
    
    \draw[decorate, decoration = {brace,raise=1.7pt},very thick]
    (D-1-1.north west) -- (D-1-2.north east) node[midway,above=2.75pt] {$l$}; 
    
    \draw[decorate, decoration = {brace,raise=1.7pt},very thick]
    (D-1-8.north west) -- (D-1-9.north east) node[midway,above=2.75pt] {$l$}; 
    
    \draw[decorate, decoration = {brace,raise=1.7pt},very thick]
    (D-1-10.north west) -- (D-1-11.north east) node[midway,above=2.75pt] {$l$}; 

\end{tikzpicture}
    \vspace*{-12.5mm}
    \caption{The orange boxes are used to create $A$-sets. The process is to exchange the bottom numbers from the first $l$ columns with numbers in the last $2l$ columns. Notice that, 22=8+14 and 21 = 9+12. The $A$-sets are $A_1=\{1,8,14\}$ and $A_2 = \{2, 9, 12\}$.}
    \label{fig:step3}
\end{figure}

After Steps 1, 2, and 3, the remaining unused labels are 
\[
\begin{array}{lll}
a_{1, i}, &i \in [s+2l+1, s+3l] \\
a_{2, i}, &i \in [1,l] \cup \{s+l+1, s+l+3, \dots, s+3l-1\}. 
\end{array}
\]

\noindent
{\bf Step 4: Determining the $C$-sets.} \   
For $i \in [1, l]$, first fix $a_{2, i} = k+1-i \in C_{i}$. 
Next, we combine the unused labels 
$a_{2, s+3l-2i}$ and $a_{1, s+3l-i}$ to form $C_i$: 
$$
\mbox{{\bf C-sets}}: 
C_{i} = \{a_{2,i}, a_{1, s+3l+1-i}, \ a_{2, s+3l+1-2i}\} = \{k-i+1, \ s+3l+1-i, \ k-(s+3l+1-2i)+1\}. 
$$
One can easily check that $C_i$ is a $C$-set since the sum of the three elements is $2(k+1)$. See \Cref{fig:step4} for an illustration of Step 4 and the final partition for \Cref{example}. 

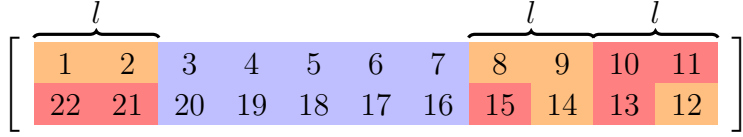
\begin{figure}[h]
    \centering
    \begin{tikzpicture}
    \matrix  at (0,-10.25)
    [  
        matrix of math nodes, 
        nodes in empty cells,
        minimum width=width("8888"),
        left delimiter={[},right delimiter={]},
    ] (D)
    { 
        \org1  & \org 2 &  \blue3 &  \blue4 &  \blue5 &  \blue6 &  \blue7 &  \org8 &  \org9 & \red10 & \red11\\
        \red22 & \red21 &\blue 20 & \blue19 & \blue18 &\blue 17 & \blue16 & \red15 & \org14 & \red13 & \org12\\
    };
    
    \draw[decorate, decoration = {brace,raise=1.7pt},very thick]
    (D-1-1.north west) -- (D-1-2.north east) node[midway,above=2.75pt] {$l$}; 

    \draw[decorate, decoration = {brace,raise=1.7pt},very thick]
    (D-1-8.north west) -- (D-1-9.north east) node[midway,above=2.75pt] {$l$}; 
    
    \draw[decorate, decoration = {brace,raise=1.7pt},very thick]
    (D-1-10.north west) -- (D-1-11.north east) node[midway,above=2.75pt] {$l$}; 
    
\end{tikzpicture}
    \vspace*{-12.5mm}
    \caption{The remaining numbers are used to create $C$-sets. The $C$-sets are $C_1=\{22,11,13\}$ and $C_2=\{21,10,15\}$.}
    \label{fig:step4}
\end{figure}

After the four steps, each number in $[1,k]$ belongs to exactly one of the $A$-, $B$-, $C$-sets. 
It is clear that the numbers in the first row and the numbers in the second row up to $a_{2, l+s}$ are used exactly once. For the remaining numbers, $a_{2, i}$, $i \in [s+l+1, s+3l]$, exactly half are in the $A$-sets while the other half are in the  $C$-sets, according to the parity of $i$.  See \Cref{fig:covering} for an illustration. 
\end{proof}

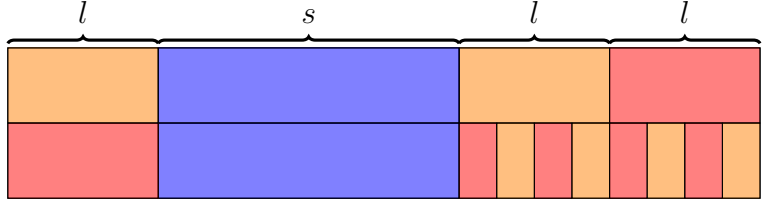
\begin{figure}[h]
    \centering
    \begin{tikzpicture}
    \draw (0,0) rectangle (10,2); 
    \draw (0,1) node{} -- (10,1) node{}; 

    \draw (2,0) node{} -- (2,2)  node{}; 
    \draw (6,0) node{} -- (6,2)  node{}; 
    \draw (8,0) node{} -- (8,2)  node{}; 
    
    \draw (1,2.1) node[above=2.5pt,black]{$l$}; 
    \draw [decorate,
    decoration = {brace,raise=1.7pt},very thick] (0,2) -- (2,2); 
    
    \draw (4,2.1) node[above=2.5pt,black]{$s$}; 
    \draw [decorate,
    decoration = {brace,raise=1.7pt},very thick] (2,2) -- (6,2);
    
    \draw (7,2.1) node[above=2.5pt,black]{$l$}; 
    \draw [decorate,
    decoration = {brace,raise=1.7pt},very thick] (6,2) -- (8,2); 
    
    \draw (9,2.1) node[above=2.5pt,black]{$l$}; 
    \draw [decorate,
    decoration = {brace,raise=1.7pt},very thick] (8,2) -- (10,2); 
    
    \draw[fill=orange, fill opacity=0.5] (0,1) rectangle (2,2); 
    \draw[fill=orange, fill opacity=0.5] (6,1) rectangle (8,2); 
    
    \draw[fill=blue, fill opacity=0.5] (2,1) rectangle (6,2); 
    \draw[fill=blue, fill opacity=0.5] (2,0) rectangle (6,1); 
    
    \draw[fill=red, fill opacity=0.5] (0,0) rectangle (2,1); 
    \draw[fill=red, fill opacity=0.5] (8,1) rectangle (10,2); 
    
    \draw[fill=red, fill opacity=0.5] (6,0) rectangle (6.5,1);
    \draw[fill=orange, fill opacity=0.5] (6.5,0) rectangle (7,1);
    \draw[fill=red, fill opacity=0.5] (7,0) rectangle (7.5,1);
    \draw[fill=orange, fill opacity=0.5] (7.5,0) rectangle (8,1);
    \draw[fill=red, fill opacity=0.5] (8,0) rectangle (8.5,1);
    \draw[fill=orange, fill opacity=0.5] (8.5,0) rectangle (9,1);
    \draw[fill=red, fill opacity=0.5] (9,0) rectangle (9.5,1);
    \draw[fill=orange, fill opacity=0.5] (9.5,0) rectangle (10,1);
\end{tikzpicture}
    \vspace*{-12.5mm}
    \caption{This figure helps visualize how $[1,k]$ is partitioned. $A$-sets cover the blocks in orange, $B$-sets cover the blocks in blue, and $C$-sets cover the blocks in red.}
    \label{fig:covering}
\end{figure}

 The next corollary is an immediate consequence of \Cref{lem:zerosum}, which allows us to partition the set of  integers $[1,k]$ into subsets of any sizes greater than one.    
 
\begin{corollary}[\cite{KAPLAN20092010,LIANG20149}]
\label{cor:part}
Let $k=r_1+r_2+\dotsb+r_t$ be a partition of the positive integer $k$, where $r_i\geq 2$ for $i \in [1,t]$. 
Then the set $[1,k]$ can be partitioned into pairwise disjoint subsets, $D_1, D_2, \dots, D_t$, such that for every $1\leq i \leq t$, $|D_i|=r_i$ and $\sum_{a\in D_i} a \equiv 0 \pmod{k'}$, where $k'=k+1$ if $k$ is even, and $k'=k$ if $k$ is odd.
\end{corollary}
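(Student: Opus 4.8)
The plan is to realize each target block $D_i$ as a disjoint union of small ``atoms'' --- sets of size $2$ or $3$ whose element-sum is already a multiple of $k'$ --- and to manufacture exactly the right supply of these atoms using \Cref{lem:zerosum}. The key elementary fact is that every integer $r_i \geq 2$ decomposes into $2$'s and $3$'s: write $r_i$ as $r_i/2$ twos when $r_i$ is even, and as one $3$ together with $(r_i-3)/2$ twos when $r_i$ is odd. Hence, once enough size-$2$ and size-$3$ atoms of the correct sum are on hand, each $D_i$ is obtained by gluing together the atoms dictated by such a decomposition, and its sum is automatically a multiple of $k'$. The whole problem thus reduces to producing the correct number of atoms of each size.

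First I would count. Let $p$ denote the number of indices $i$ with $r_i$ odd; since $\sum_i r_i = k$, a parity count gives $p \equiv k \pmod{2}$. Assembling the $D_i$ as above requires exactly $p$ atoms of size $3$ (one per odd part) and $(k-3p)/2$ atoms of size $2$; the inequality $k = \sum_i r_i \geq 3p$ (each odd part is at least $3$) guarantees the latter is a non-negative integer.

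Next I would handle the case $k$ even, where $k' = k+1$ and $p$ is even. Set $l = p/2$ and $s = (k-3p)/2$, so that $k = 2s + 6l$. Then \Cref{lem:zerosum} partitions $[1,k]$ into $l$ $A$-sets and $l$ $C$-sets --- these are the $2l = p$ size-$3$ atoms, each of sum a multiple of $k+1$ --- together with $s$ size-$2$ $B$-sets of sum $k+1$. Distributing one size-$3$ atom to each odd part and parceling out the $s$ $B$-sets as prescribed by the decompositions completes this case.

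The delicate case, and the main obstacle, is $k$ odd, because \Cref{lem:zerosum} only partitions intervals of even length. Here $k' = k$ and $p$ is odd. I would apply \Cref{lem:zerosum} to $[1,k-1]$, which has even length and, crucially, yields atoms whose sums are multiples of $(k-1)+1 = k$; choosing $l = (p-1)/2$ and $s = (k-3p+2)/2$ makes $k-1 = 2s+6l$, with $s \geq 1$ because $k \geq 3p$. This produces $2l = p-1$ size-$3$ atoms and $s$ size-$2$ atoms covering $[1,k-1]$, leaving only the single element $k$ unused. The pivotal move is to absorb $k$: adjoin it to one of the size-$2$ atoms (sum $k$), turning that pair into a triple of sum $2k \equiv 0 \pmod{k}$. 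This raises the size-$3$ count to $p$ and lowers the size-$2$ count to $(k-3p)/2$ --- precisely the required supply --- after which the same distribution finishes the proof. The crux is checking that this absorption is always possible (at least one $B$-set exists, which follows from $k \geq 3p$) and that the atom counts reconcile exactly with the decompositions of the $r_i$; the remaining verifications are routine bookkeeping.
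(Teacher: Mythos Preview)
Your proposal is correct and follows essentially the same approach as the paper: decompose each $r_i$ into $2$'s and $3$'s, invoke \Cref{lem:zerosum} to manufacture the needed size-$2$ and size-$3$ atoms, and for odd $k$ reduce to $[1,k-1]$ and absorb the leftover element $k$ into a $B$-set. The only cosmetic difference is that the paper phrases the odd-$k$ step as replacing one odd part $r_j$ by $r_j-1$ and then adjoining $k$ to the resulting $D_j$, whereas you phrase it at the atom level; these amount to the same construction.
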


\begin{proof} Let $k=r_1+r_2+\dotsb+r_t$, where each $r_i\geq 2$ for $i=1,2,\dots,t$. First, assume $k$ is even. Write each $r_i$ as the sum of multiples of 2 and multiples of 3, $r_i=2m_i+3n_i$, $m_i, n_i \geq 0$. Then $k = \sum_{i=1}^t (2m_i + 3n_i)$. Because $k$ is even, $n_1+n_2 +\dots+n_t$ must be even. Denote
$$s=m_1+m_2+\dotsb+m_t \ \ \mbox{and} \ \    l=\frac{n_1+n_2+\dotsb +n_t}{2}.
$$
Then, $k=2s+6l$. By \Cref{lem:zerosum},
$[1,k]$ can be partitioned into sets $Q_1$, $Q_2,\dots, Q_{s+2l}$. 

We denote this partition of $[1,k]$ as ${\cal Q}$. To create the set $D_i$ with $|D_i|=r_i=2m_i+3n_i$, take $m_i$ sets in ${\cal Q}$ that are 2-element sets,  and take $n_i$ sets in ${\cal Q}$ that are 3-element sets. With the selected sets in ${\cal Q}$, let $D_i$ be the union of those sets. Since each 2- or 3-element set in ${\cal Q}$ has the elements sum to $0  \pmod{k+1}$, it implies that the elements in $D_i$ also sum to $0  \pmod{k+1}$. 

Now assume $k=r_1+r_2 + \dots + r_t$ is odd. Then there exists some odd $r_j$, $r_j \geq 3$. Then $k-1=r_1+r_2+\dotsb (r_j-1)+\dotsb +r_t$. Note that $r_j -1 \geq 2$. As $k-1$ is even, following the procedure described above, we can partition the set $[1,k-1]$ into the union of sets $D_i$, $i \in [1, t]$, where $|D_i|=r_i$ for $i \neq j$ and $|D_j|=r_j-1$, and the sum of each set $D_i$ is a multiple of $k$, $i \in [1,t]$. Once the sets are created, 
replace $D_j$ by   $\{k\}\cup D_{j}$ and keep the other $D_i$, $i \neq j$, to create a partition for $[1,k]$ which satisfies the statement.
\end{proof}

The following definitions show how we apply  \Cref{cor:part} to obtain an antimagic labeling for some trees. A {\it leaf} of a graph is a degree-1 vertex; a non-leaf vertex is called an {\it internal vertex}.

\begin{definition}
\label{def:root_trees}
An {\bf (oriented) rooted tree} is a tree where one vertex is designated to be the root. We draw the root at the top of the tree and orient edges from top to down.  Define the root to be at level 0. A vertex $v$ is at level-$i$ if the distance from $v$ to the root is $i$.  All the edges of the tree are oriented from vertices in level-$i$ to vertices in level-$(i+1)$. For a vertex $v$, the {\bf children of $v$} are the vertices that are adjacent to $v$ and are one level higher than $v$. We call $v$ the parent of its children. We say the tree is oriented from parent to children. 
\end{definition}

\begin{definition}
\label{def:edges}
Let $T$ be a rooted tree. For every non-root vertex $v$, we denote $e^v$ as the {\bf incoming edge} (the edge from the parent) of $v$. This is well-defined since each vertex (other than the root) has exactly one incoming edge. 
\end{definition}

\begin{definition}
Let $T$ be a rooted tree with $m$ edges. For $v \in V(T)$, denote  $E^+(v)$ the set of {\bf outgoing edges}  from $v$ and denote $|E^+(v)|=n_v$. 
\end{definition}

Let  $v_1, v_2, \dots,  v_t$ be the vertices of $T$ with $n_{v_i} \geq 1$. It readily follows that 
\begin{equation}
    m=|E(T)|=n_{v_1} + n_{v_2} + \dots + n_{v_t}.
\label{equ1}
\end{equation}

\begin{definition}
For a rooted tree $T$ with $m$ edges, we call a bijective mapping  $f:E(T)\to [1,m]$ a {\bf zero-sum labeling} of $T$ if for each $i \in [1,t]$, 
$$
\sum_{e \in E^+(v_i)} f(e) \equiv 0 \  \mbox{(mod $m'$)},
$$
where $m'=m$ if $m$ is odd; and $m'=m+1$ if $m$ is even. 
\end{definition}

\begin{proposition}
{\rm \cite{KAPLAN20092010, LIANG20149}}
\label{prop}
Let $T$ be a tree with an even number of edges and at most one vertex of degree-2. 
Then there exists a zero-sum labeling for $T$ that is antimagic. 
\end{proposition}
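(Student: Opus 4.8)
The plan is to realize the desired zero-sum labeling as a direct consequence of \Cref{cor:part}, after choosing the root of $T$ judiciously, and then to verify antimagicness by a residue argument modulo $m+1$.

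First I would choose the root so that \emph{every} internal vertex has at least two children, i.e. $n_{v_i}\ge 2$ for all $i\in[1,t]$. Recall that a non-root vertex $v$ satisfies $\deg(v)=n_v+1$, so $n_v=1$ happens exactly when $\deg(v)=2$. If $T$ has a (necessarily unique) degree-$2$ vertex $w$, I root $T$ at $w$; as the root, $w$ has $n_w=2$, while every other non-leaf vertex has degree $\ge 3$ and hence $n_v\ge 2$. If $T$ has no degree-$2$ vertex, then since $m$ is even we have $m\ge 2$, so $T$ is not a path (a path with $m\ge 2$ edges has degree-$2$ vertices) and therefore has a vertex of degree $\ge 3$; rooting there again yields $n_{v_i}\ge 2$ for every internal vertex. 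In either case the numbers $n_{v_1},\dots,n_{v_t}$ form a partition $m=n_{v_1}+\dots+n_{v_t}$ with each part at least $2$.

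Next I would apply \Cref{cor:part} with $k=m$ (even, so $k'=m+1$) and $r_i=n_{v_i}$ to obtain a partition of $[1,m]$ into sets $D_1,\dots,D_t$ with $|D_i|=n_{v_i}$ and $\sum_{a\in D_i}a\equiv 0 \pmod{m+1}$. Define $f$ by assigning, for each $i$, the labels of $D_i$ bijectively (in any order) to the outgoing edges in $E^+(v_i)$. Then $f:E(T)\to[1,m]$ is a bijection with $\sum_{e\in E^+(v_i)}f(e)=\sum_{a\in D_i}a\equiv 0\pmod{m+1}$, so $f$ is a zero-sum labeling by definition. To finish I would partition $V(T)$ into the leaves $L$, the internal non-root vertices $I$, and the root $\rho$. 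A leaf $u$ has $\phi_f(u)=f(e^u)\in[1,m]$. The root has $\phi_f(\rho)=\sum_{a\in D_\rho}a$, a positive multiple of $m+1$, hence $\ge m+1$; an internal $v\in I$ has $\phi_f(v)=f(e^v)+\sum_{a\in D_v}a\ge 1+(m+1)>m$. Thus every vertex of $I\cup\{\rho\}$ has vertex-sum $>m$, separating them from all leaves, whose vertex-sums are distinct because distinct leaves have distinct incoming edges and $f$ is injective. Reducing modulo $m+1$ on $I\cup\{\rho\}$, we have $\phi_f(\rho)\equiv 0$, while $\phi_f(v)\equiv f(e^v)\pmod{m+1}$ with $f(e^v)\in[1,m]$ for $v\in I$, so no such $v$ is $\equiv 0$; moreover distinct vertices have distinct incoming edges, hence distinct labels in $[1,m]$, which stay distinct modulo $m+1$. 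Therefore the vertices of $I\cup\{\rho\}$ have pairwise distinct residues, and all vertex-sums of $T$ are distinct.

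The main obstacle is the first step: the whole argument hinges on rooting $T$ so that no internal vertex has exactly one child, and this is precisely where the hypothesis of at most one degree-$2$ vertex is indispensable, since a second degree-$2$ vertex would force a part equal to $1$, to which \Cref{cor:part} does not apply. Once the root is chosen correctly, the zero-sum property forces the clean modular separation above, and antimagicness holds for \emph{any} label assignment respecting the partition $\{D_i\}$, so no further casework on the ordering of labels is needed.
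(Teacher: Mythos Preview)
Your proposal is correct and follows essentially the same approach as the paper: root $T$ at the degree-$2$ vertex if present (otherwise at any internal vertex), apply \Cref{cor:part} to the outgoing-edge partition, and distinguish vertex-sums via residues modulo $m+1$. The only minor difference is that you insert an extra magnitude-separation step (leaves have sum $\le m$, non-leaves have sum $>m$) before applying the residue argument to $I\cup\{\rho\}$; the paper observes more directly that \emph{every} non-root vertex $v$---leaf or not---satisfies $\phi_f(v)\equiv f(e^v)\pmod{m+1}$, so the residue argument alone already separates all vertices without the magnitude split.
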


\begin{proof}
Let $T$ be a tree with $m$ edges, where $m$ is even. Root $T$ at the degree-2 vertex if $T$ has a degree-2 vertex; otherwise, root $T$ at any internal vertex. Denote this root as $w$. Let $v_1, v_2, \dots, v_t$ be the internal vertices of $T$ and $n_{i}$ the number of outgoing edges of $v_i$. By our assumption, for $i\in[1,t]$, $n_i \geq 2$. By \Cref{equ1} and \Cref{cor:part}, there exists a zero-sum labeling $f$ for $T$ by assigning numbers in $D_i$ to the outgoing edges of $v_i,\  i \in[1,t]$.  Further, for every non-root vertex $v \in V(T)$, $\phi_f(v) = f(e^{v}) + \sum_{e \in E^+(v)} f(e) \equiv f(e^{v}) \pmod{m+1}$ (if $v$ is a leaf then $E^+(v)=\emptyset$). For the root vertex $w$, $\phi_f(w) \equiv 0 \not\equiv f(e^{v}) \pmod{m+1}$ for any $v$. Since each $v$ has a distinct incoming edge, no two internal vertices will have congruent sums$\pmod{m+1}$. Thus, all vertices have distinct vertex sums, implying $f$ is an antimagic labeling for $T$.
\end{proof}

\begin{figure}
    \centering
    \begin{tikzpicture}[nodes={draw,circle, fill=black, scale=0.55}, -, thick]
\tikzstyle{level 3}=[sibling distance=12mm]
\node[]{}
    child{node{}
        child{node{}
        edge from parent node[draw=none,fill=none,left,scale=1.25]{2}
        }
        child{node{}
            child{node{}
            edge from parent node[draw=none,fill=none,left,scale=1.25]{19}
            }
            child{node{}
            edge from parent node[draw=none,fill=none,left,scale=1.25]{4}
            }
            child{node{}
            edge from parent node[draw=none,fill=none,left,scale=1.25]{5}
            }
            child{node{}
            edge from parent node[draw=none,fill=none,left,scale=1.25]{18}
            }
            child{node{}
            edge from parent node[draw=none,fill=none,left,scale=1.25]{11}
            }
            child{node{}
            edge from parent node[draw=none,fill=none,left,scale=1.25]{13}
            }
            child{node{}
            edge from parent node[draw=none,fill=none,right,scale=1.25]{22}
            }
        edge from parent node[draw=none,fill=none,left,scale=1.25]{9}
        }
        child{node{}
        edge from parent node[draw=none,fill=none,right,scale=1.25]{12}
        }
    edge from parent node[draw=none,fill=none,left,scale=1.25]{1}
    }
    child{node{}
    edge from parent node[draw=none,fill=none,left,scale=1.25]{8}
    }
    child{node{}
    edge from parent node[draw=none,fill=none,left,scale=1.25]{14}
    }
    child{node{}
    edge from parent node[draw=none,fill=none,left,scale=1.25]{7}
    }
    child{node{}
        child{node{}
        edge from parent node[draw=none,fill=none,left,scale=1.25]{10}
        }
        child{node{}
        edge from parent node[draw=none,fill=none,left,scale=1.25]{15}
        }
        child{node{}
            child{node{}
            edge from parent node[draw=none,fill=none,left,scale=1.25]{3}
            }
            child{node{}
            edge from parent node[draw=none,fill=none,left,scale=1.25]{20}
            }
            child{node{}
            edge from parent node[draw=none,fill=none,right,scale=1.25]{6}
            }
            child{node{}
            edge from parent node[draw=none,fill=none,right,scale=1.25]{17}
            }
        edge from parent node[draw=none,fill=none,right,scale=1.25]{21}
        }
    edge from parent node[draw=none,fill=none,right,scale=1.25]{16}    
    };
\end{tikzpicture}
    \vspace*{-12.5mm}
    \caption{A zero-sum antimagic labeling of a tree using the partition of \Cref{example} in \Cref{fig:step4}.} 
    \label{fig:compatLabel}
\end{figure}
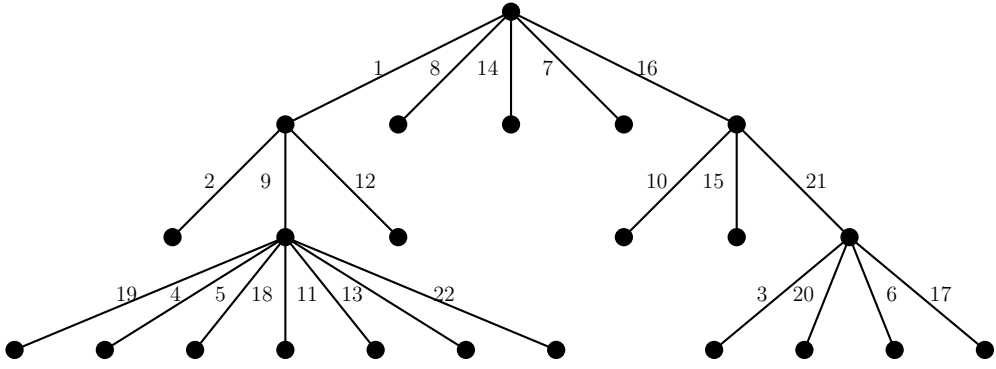

See \Cref{fig:compatLabel} for an illustration of a zero-sum labeling of a tree by using \Cref{cor:part}. In the proof of \Cref{thm} in \cite{KAPLAN20092010,LIANG20149} the tree is carefully rooted so that a zero-sum labeling is also an antimagic labeling. Following this idea, in the proof of \Cref{thm:newresult} (next section), we carefully root each component tree of a given forest $F$ at a vertex and then identify all these roots as a  single vertex to form a single tree $T$. We find a zero-sum labeling for $T$ using the zero-sum partition method. Then we use this labeling to produce an antimagic labeling for $F$. 

%
\section{Proof of \texorpdfstring{\Cref{thm:newresult}}{Theorem 2}}
%

Let $F$ be a forest with $s$ component trees and $m=|E(F)|$. Denote the component trees of $F$ by $T_i$ with root at $w_i$ for  $i \in[1,s]$. If $s=1$, then $F$ is a tree, and the result follows by \Cref{thm}. Henceforth we assume $s\geq 2$. We proceed with the proof by considering cases. 

\noindent 
{\bf Case 1: $m$ is odd.} Consider two sub-cases.

\medskip

{\bf Sub-case 1.1: $F$ has no degree-2 vertex.} Let $w_i\in V(T_i)$ be a leaf of $T_i$ for $i \in [1, s]$. Root each $T_i$ at $w_i$ and orient from parents to children as defined in \Cref{def:root_trees}. Denote $w_i'$ to be the child of $w_i$ and denote the edge between them as $e_i = w_iw_i'$.  

Let $T$ be the tree obtained by identifying the vertices $w_1, w_2, \dots, w_s$ into a single vertex $w$, where $w$ is the root of $T$. Let $v_1, v_2, \dots, v_t$ be the non-leaf vertices of $T$. Assume each vertex $v_i$ has $n_i$ outgoing edges, $n_i \geq 2$. Recall from \Cref{equ1}, the number of edges of $T$ is  
$m = n_1 + n_2 + \dots + n_t$. Since $m$ is odd, there exists some odd $n_j$ so that $n_j \geq 3$.
By  \Cref{cor:part} one can partition $[1,m]$ into sets $D_1,  D_2,  \dots, D_t$, where $|D_i|=n_i \geq 2$, and the elements in each  $D_i$ sum up to a multiple of $m$. Further, the label $m$ is assigned to an outgoing edge $v_jv_{j'}$ of $v_j$ with $n_j \geq 3$.   This gives a zero-sum labeling $f$ for $T$, where $\phi_f(v) \equiv  f(e^{v})$ (mod $m$) holds for all vertex, except the root $w$.  Thus all values of $\phi_f(v_i)$, $i \in [1,t]$, are distinct, except that $\phi_f(w) \equiv \phi_f(v_{j'}) \equiv 0$ (mod $m$), as $f(v_jv_{j'})=m$.   

Let $g$ be the labeling for $F$ defined by 
\[
g(e) = \begin{cases}
f(ww_i') & \textrm{if } e=w_iw_i',\\
f(e) & \textrm{otherwise.}
\end{cases}
\]

Note that $v_{j'}$  is the only vertex with $\phi_g(v_{j'})=\phi_f(v_{j'}) \equiv 0$ (mod $m$).  For every non-root vertex $u$ of $F$ (i.e., $u \neq w_i$ for all $i \in [1,s]$), we have $\phi_g(u) = \phi_f(u) \equiv f(e^u) \pmod{m}$. Thus $\phi_g(u)\pmod{m}$ are pair-wisely distinct. If $w_i$ and $w_j$ are two root vertices of the component trees, $i \neq j$, then $\phi_g(w_i)=f(e_i) \neq f(e_j) = \phi_g(w_j)$. Thus the roots have different vertex-sums. Further, $\phi_g(w_i)\equiv \phi_g(w_i') \equiv f(e_i) \pmod{m}$. Since $w_i'$ has at least two children, we have $\phi_g(w_i') \geq f(e_i)+ m > f(e_i) = \phi_g(w_i)$, implying $\phi_g(w_i)\neq \phi_g(w_i')$. 
Hence, $g$ is an antimagic labeling for $F$. See \Cref{fig:case1.1} as an example. 

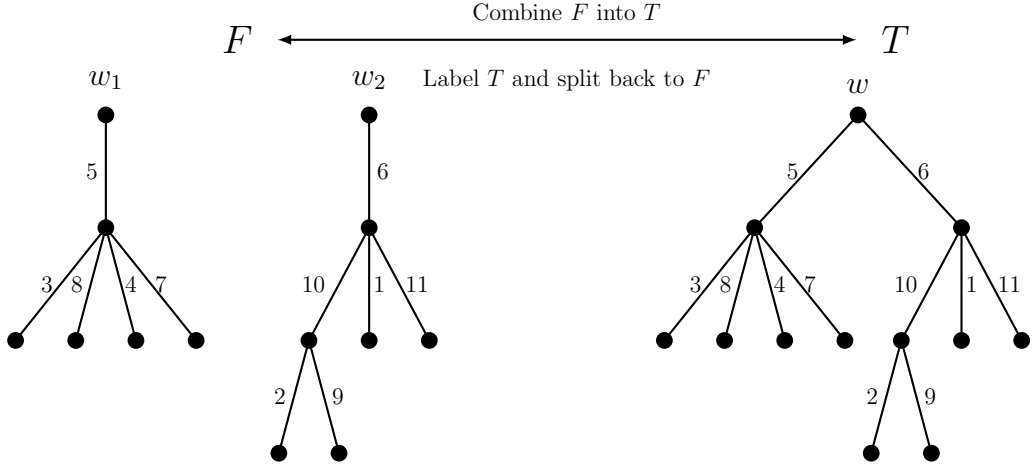
\begin{figure}[H]
    \centering
    \begin{tikzpicture}[nodes={draw,circle, fill=black,scale=0.5}, -, thick]
    \tikzstyle{level 1}=[sibling distance = 2.75cm]
    \tikzstyle{level 2}=[sibling distance = 0.8cm]
    \node[above,draw=none,fill=none,scale=2] at  (-6,0) {\(w_1\)};
    \node at (-6,0) {}
        child{ node{}
            child{ node{}
            edge from parent node[draw=none,fill=none,left=-2mm,scale=1.5]{3}
            }
            child{ node{}
            edge from parent node[draw=none,fill=none,left=-2mm,scale=1.5]{8}
            }
            child{ node{}
            edge from parent node[draw=none,fill=none,right=-3mm,scale=1.5]{4}
            }
            child{ node{}
            edge from parent node[draw=none,fill=none,right=-3mm,scale=1.5]{7}
            }
        edge from parent node[draw=none,fill=none,left=-2mm,scale=1.5]{5}
        };

    \node[above,draw=none,fill=none,scale=2] at  (-2.5,0) {\(w_2\)};
    \node at (-2.5,0) {}
        child{ node{}
            child{ node{}
                child{ node{}
                edge from parent node[draw=none,fill=none,left=-2mm,scale=1.5]{2}
                }
                child{ node{}
                edge from parent node[draw=none,fill=none,right=-2mm,scale=1.5]{9}
                }
            edge from parent node[draw=none,fill=none,left,scale=1.5]{10}
            }
            child{ node{}
            edge from parent node[draw=none,fill=none,right=-3mm,scale=1.5]{1}
            }
            child{ node{}
            edge from parent node[draw=none,fill=none,right=-2mm,scale=1.5]{11}
            }
        edge from parent node[draw=none,fill=none,right=-2mm,scale=1.5]{6}
        };

    \node[above,draw=none,fill=none,scale=2] at  (4,0) {\(w\)};
    \node at (4,0) [fill] {}
        child{ node{}
            child{ node{}
            edge from parent node[draw=none,fill=none,left=-2mm,scale=1.5]{3}
            }
            child{ node{}
            edge from parent node[draw=none,fill=none,left=-2mm,scale=1.5]{8}
            }
            child{ node{}
            edge from parent node[draw=none,fill=none,right=-3mm,scale=1.5]{4}
            }
            child{ node{}
            edge from parent node[draw=none,fill=none,right=-3mm,scale=1.5]{7}
            }
        edge from parent node[draw=none,fill=none,left=-2mm,scale=1.5]{5}
        }
        child{ node{}
            child{ node{}
                child{ node{}
                edge from parent node[draw=none,fill=none,left=-2mm,scale=1.5]{2}
                }
                child{ node{}
                edge from parent node[draw=none,fill=none,right=-2mm,scale=1.5]{9}
                }
            edge from parent node[draw=none,fill=none,left,scale=1.5]{10}
            }
            child{ node{}
            edge from parent node[draw=none,fill=none,right=-3mm,scale=1.5]{1}
            }
            child{ node{}
            edge from parent node[draw=none,fill=none,right=-2mm,scale=1.5]{11}
            }
        edge from parent node[draw=none,fill=none,right=-2mm,scale=1.5]{6}
        };
    
    \node [draw=none,fill=none,scale=2.5] (F) at (-4.25,1) {\(F\)};
    \node [draw=none,fill=none,scale=2.5] (T) at (4.5,1) {\(T\)};
    \draw [latex-latex] (F) -- (T);

    \node[draw=none,fill=none,above=-20mm,scale=1.5] at ($(F)!0.5!(T)$) {Combine \(F\) into \(T\) };
    \node[draw=none,fill=none,below=-30mm,scale=1.5] at ($(F)!0.5!(T)$) {Label \(T\) and split back to \(F\)};
\end{tikzpicture}
    \vspace*{-12.5mm}
    \caption{An example for Sub-case 1.1. The forest $F$ to the left has two component trees, $T_1$ and $T_2$, and $m=11$ edges. Root $T_1$ and $T_2$ at a leaf, $w_1$ and $w_2$,  and identify $w_1$ and $w_2$ into a single root $w$ to form a tree $T$ shown on the right. A zero-sum labeling on $T$ gives an antimagic labeling on $F$ when $T$ is split back into the components of $F$.} 
    \label{fig:case1.1}
\end{figure}

{\bf Sub-case 1.2: $F$ contains a degree-2 vertex.} Suppose $F$ has a degree-2 vertex, $v'$. Without loss of generality, assume $v' \in V(T_1)$. Root each $T_i$ at a leaf $w_i\in V(T_i)$ for $i \in [1,s]$. 
Let $w_i'$ be the child of $w_i$ and let $v''$ be the (only) child of $v'$. Let $T$ be the tree obtained by identifying $w_1, w_2, \dots, w_s$ into a vertex $w$, which is the  root of $T$. Note that it is possible that $v'$ is the child of $w_1$ in $T_1$.  Let $w=v_1, v_2, \dots, v_t$ be the internal vertices of $T \setminus \{v'\}$.    
By \Cref{cor:part}, we can partition the numbers in the set $[1,m-1]$ into sets that sum up to multiples of $m$ and assign these sets to the outgoing edges of $v_i$, $i \in [1,t]$. Finally,  we assign $m$ to the edge $v'v''$.  This labeling, denoted by $f$, is a zero-sum labeling for $T$, where $\phi_f(u) \equiv f(e^u) \pmod{m}$ holds for every non-root vertex $u$ and $f(v'v'')=m$. Note that $\phi_f(v') \equiv f(e^{v'}) \pmod{m}$.
Let $g$ be the labeling for $F$ defined as follows:
\[
g(e)=
\begin{cases}
    f(ww_i') & \textrm{ if } e=w_iw_i',\\
    f(e) & \textrm{ otherwise.}
\end{cases}
\]
Then $\phi_g(u)\equiv g(e^u) \pmod{m}$ if $u \neq w_i$. Note that $v''$ is the only vertex with $\phi_g(v'')\equiv 0 \pmod{m}$. If $u\neq v$, then $\phi_g(u)\not\equiv \phi_g(v) \pmod{m}$ since $u$ and $v$ have different incoming edge labels. However, we have $\phi_g(w_i)\equiv \phi_g(w_i') \pmod{m}$.  Each $w_i'$ has at  least two children or one child if $w_i'=v'$ (the degree-2 vertex), so $\phi_g(w_i')\geq g(w_iw_i')+m > g(w_iw_i') = \phi_g(w_i)$. Thus $g$ is an antimagic labeling for $F$. See \Cref{fig:case1.2} as an example. 

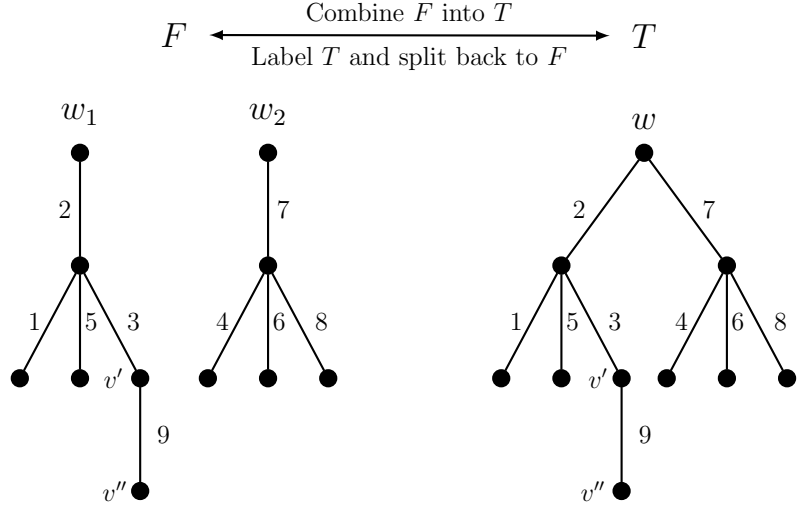
\begin{figure}[h]
    \centering
        \begin{tikzpicture}[nodes={draw,circle, fill=black,scale=0.55}, -, thick]
    \tikzstyle{level 1}=[sibling distance=25mm]
    \tikzstyle{level 2}=[sibling distance=22mm]
    \tikzstyle{level 3}=[sibling distance=8mm]
    \tikzstyle{level 4}=[sibling distance=10mm]
    \tikzset{deg2/.style={draw,circle, fill=red, scale=0.75 }} 
    \node[white]{}
    child{ node(A){}
        child{ node{}
            child{ node{}
            edge from parent node[draw=none,fill=none,left=-2mm,scale=1.5]{1}
            }
            child{ node{}
            edge from parent node[draw=none,fill=none,right=-3mm,scale=1.5]{5}
            }
            child{ node{}
                child{ node{} node[draw=none, fill=none,scale=1.5,left=-.5mm]{$v''$}
                edge from parent node[draw=none,fill=none,right,scale=1.5]{9}
                }
             node[draw=none, fill=none,scale=1.5,left]{$v'$}
             edge from parent node[draw=none,fill=none,right,scale=1.5]{3}
            }
        edge from parent node[draw=none,fill=none,left=-2mm,scale=1.5]{2}
        }
    edge from parent[draw=none]
    }
    child{ node(B){}
        child{node{}
            child{node{}
            edge from parent node[draw=none,fill=none,left=-2mm,scale=1.5]{4}
            }
            child{node{}
            edge from parent node[draw=none,fill=none,right=-3mm,scale=1.5]{6}
            }
            child{node{}
            edge from parent node[draw=none,fill=none,right,scale=1.5]{8}
            }
        edge from parent node[draw=none,fill=none,right=-2mm,scale=1.5]{7}
        }
    edge from parent[draw=none]
    }
    child[sibling distance=50mm]{node(w){}
        child{ node{}
        child{ node{}  edge from parent node[draw=none,fill=none,left=-2mm,scale=1.5]{1}}
        child{ node{}  edge from parent node[draw=none,fill=none,right=-3mm,scale=1.5]{5}}
        child{ node{} 
            child{ node{}  node[draw=none, fill=none,scale=1.5,left]{$v''$} edge from parent node[draw=none,fill=none,right,scale=1.5]{9}}
        node[draw=none, fill=none,scale=1.5,left=-.5mm]{$v'$}
        edge from parent node[draw=none,fill=none,right,scale=1.5]{3}
        }
    edge from parent node[draw=none,fill=none,left,scale=1.5]{2}
    }
    child{ node{} 
        child{ node{}  edge from parent node[draw=none,fill=none,left=-2mm,scale=1.5]{4}}
        child{ node{}  edge from parent node[draw=none,fill=none,right=-3mm,scale=1.5]{6}}
        child{ node{}  edge from parent node[draw=none,fill=none,right,scale=1.5]{8}}
    edge from parent node[draw=none,fill=none,right,scale=1.5]{7}
    }
    edge from parent [draw=none]
    };
    \node [above, draw=none, fill=none,scale=2] at (A) {$w_1$};
    \node [above, draw=none, fill=none,scale=2] at (B) {$w_2$};
    \node [above, draw=none, fill=none, scale=2] at (w)
    {$w$};
    \node[draw=none,fill=none,scale=2,above=10mm] (F) at ($(A)!0.5!(B)$) {$F$};
    \node[draw=none,fill=none,scale=2,above=10mm](T) at (w){\(T\)};
    \draw[latex-latex] (F) -- (T);
    \node[draw=none,fill=none,above=-22mm,scale=1.5] at ($(F)!0.5!(T)$) {Combine \(F\) into \(T\) };
    \node[draw=none,fill=none,below=-35mm,scale=1.5] at ($(F)!0.5!(T)$) {Label \(T\) and split back to \(F\)};
    \end{tikzpicture}      
    \vspace*{-12.5mm}
    \caption{An example for Sub-case 1.2. The forest $F$ on the left consists of two trees $T_1$ and $T_2$, and $m=9$ edges, where $T_1$ has a degree-2 vertex $v'$ and roots of $T_1$ and $T_2$  are leaves. We identify these two roots to create a tree $T$ (on the right) and find a  zero-sum labeling on $F$ using \Cref{cor:part} and labeling $v'v''$ with $m$. Afterwards, split $T$ back into $T_1$ and $T_2$ to get an antimagic labeling for $F$ as shown.} 
    \label{fig:case1.2}
\end{figure}

\medskip

\noindent 
{\bf Case 2: $m$ is even.}

\medskip

{\bf Sub-case 2.1: $F$ has no degree-2 vertex.}  For $i \in [1, s]$, root $T_i$ at a leaf $w_i$.  
Let $w_i'$ be the child of $w_i$. Let $T$ be the tree obtained by identifying the vertices $w_1, w_2, \dots, w_s$ into a single vertex $w$. Denote $v_1, v_2, \dots, v_t$ the non-leaf vertices of $T$. Assume each vertex $v_i$ has $r_i$ children. Then $|E(T)|=m = r_1 + r_2 + \dots + r_t$, where $r_i \geq 2$ for $i \in [1,t]$. 
By \Cref{prop} there is a zero-sum labeling $f$ for $T$, which is antimagic. 

Let $g$ be the labeling for $F$ defined by 
\[
g(e) = \begin{cases}
f(ww_i') & \textrm{for } e=w_iw_i', \ i \in[1,s];\\
f(e) & \textrm{otherwise.}
\end{cases}
\]
Similar to Case 1, one can easily show that $g$ is an antimagic labeling of $F$.

\medskip

{\bf Sub-case 2.2. $F$ contains a degree-2 vertex.} 
Assume $F$ has a degree-2 vertex $u$. Without loss of generality, assume $u$ is located in $T_1$. Consider two possibilities. 
 
\medskip

{\bf $\diamond$ Sub-case 2.2.1.}  $s=2$.  Root $T_1$ at $w_1=u$. Root $T_2$ at  $w_2$ where $\deg_{T_2}({w_2}) \geq 3$. By \Cref{cor:part} there is a zero-sum labeling $f$ for $F$. In this labeling, we assign a $B$-set to edges of $E^+(w_1)$. The vertex-sums are distinct in modulo $m+1$, excluding the two roots $w_1$ and $w_2$.  
Since $w_1$ is degree-2, 
 $\phi_f(w_1)=m+1$. Further, as deg$(w_2) \geq 3$, we can choose the labels for edges incident to $w_2$ to sum up to at least $2(m+1)$ (if deg$(w_1)=3$, we use a $C$-set from \Cref{lem:zerosum}).  Thus the vertex-sums are all distinct. Therefore $f$ is an antimagic labeling for $F$. See \Cref{fig:case2.2}.

\medskip

{\bf $\diamond$ Sub-case 2.2.2.} $s \geq 3$.  Let $w_1=u$ be the root of $T_1$, and $E^+(u)=\{e', e''\}$.  For the remaining component trees $T_i$, $i \in [2, s]$, root $T_i$ at a leaf $w_i \in V(T_i)$. Let $T$ be the tree obtained by identifying $w_1,w_2,\dots, w_s$ into a single vertex $w$, and root $T$ at $w$.
By \Cref{cor:part}, there exists a zero-sum labeling for $T$ such that $f(e')$ and $f(e'')$ belong to the same $B$-set in the partition, $f(e')+ f(e'')=m+1$. This is possible as $s \geq 3$, and so deg$(w) \geq 4$.  

Let $g$ be the labeling for $F$ defined by
\[
g(e) = \begin{cases}
f(wv) & e=w_iv, \ i \in [1,s];\\
f(e) & \textrm{otherwise.}
\end{cases}
\] 

\noindent 
Then $\phi_g(w_1) = m+1 \equiv 0 \pmod{m+1}$. 
Similarly to the above cases, one can show that $g$ is an antimagic labeling for $F$.
See \Cref{fig:case2.2.2} as an example. 
This completes the proof of \Cref{thm:newresult}. 
\hfill$\Box$

\begin{figure}[H]
    \centering
    \begin{tikzpicture}[nodes={draw,circle, fill=black, scale=0.55}, -, thick]
\tikzstyle{level 1}=[sibling distance=12mm]
\tikzstyle{level 2}=[sibling distance=10mm]
\tikzstyle{level 3}=[sibling distance=10mm]
\tikzstyle{level 4}=[sibling distance=10mm]
\tikzset{deg2/.style={draw,circle, fill=red, scale=0.75 }}
    \node[draw=none,fill=none,above,scale=2] at (-5,0) {$w_1$};
    \node at (-5,0) {}
        child{ node{}
            child{ node{}
            edge from parent node[draw=none,fill=none,left,scale=1.5]{3}
            }
            child{ node{} 
            edge from parent node[draw=none,fill=none,right,scale=1.5]{10}
            }
        edge from parent node[draw=none,fill=none,left,scale=1.5]{2}
        }
        child{ node{}
        edge from parent node[draw=none,fill=none,right,scale=1.5]{11}
        };
    \node[draw=none,fill=none,above,scale=2] at (-2,0) {$w_2$};
    \node at (-2,0) {}
        child{ node{}
            child{ node{}
            edge from parent node[draw=none,fill=none,left,scale=1.5]{4}
            }
            child{ node{}
            edge from parent node[draw=none,fill=none,right,scale=1.5]{9}
            }
        edge from parent node[draw=none,fill=none,left,scale=1.5]{6}
        }
        child{ node{}
        edge from parent node[draw=none,fill=none,right=-2mm,scale=1.5]{8}
        }
        child{ node{}
            child{ node{}
            edge from parent node[draw=none,fill=none,left,scale=1.5]{1}
            }
            child{ node{}
            edge from parent node[draw=none,fill=none,right=-2mm,scale=1.5]{5}
            }
            child{ node{}
            edge from parent node[draw=none,fill=none,right,scale=1.5]{7}
            }
        edge from parent node[draw=none,fill=none,right,scale=1.5]{12}
        };
\end{tikzpicture}
    \vspace*{-12mm}
    \caption{An example of Sub-case 2.2.1. This forest has two components, $T_1$ and $T_2$, $m=12$ edges, and a degree-2 vertex $w_1$. Root $T_1$ at $w_1$, and root $T_2$ at a vertex $w_2$ of degree-3 or higher. By \Cref{cor:part} there exists a zero-sum antimagic labeling so that $\phi(w_1)=m+1$ and $\phi(w_2) \geq 2(m+1)$.}  
    \label{fig:case2.2}
\end{figure}
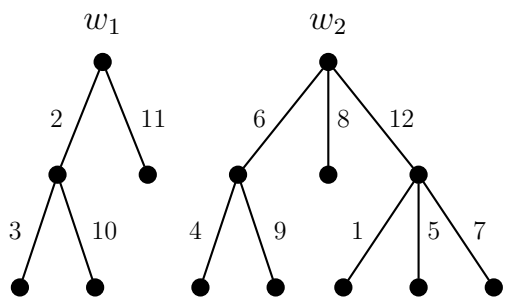

\begin{figure}[H]
    \scalebox{0.9}{
\begin{tikzpicture}[nodes={draw,circle, fill=black, scale=0.55}, -, thick]
\tikzstyle{level 1}=[sibling distance=25mm]
\tikzstyle{level 2}=[sibling distance=9.5mm]

\node[draw=none,fill=none,above,scale=2] at (-2,0) {$w_1$};

\node at (-2,0) {}
    child{ node{}
        child{ node{} 
        edge from parent node[draw=none,fill=none,left=-2.75mm,scale=2] {1}
        }
        child{ node{}
        edge from parent node[draw=none,fill=none,left=-3.25mm,scale=2] {6}
        }
        child{ node{}
        edge from parent node[draw=none,fill=none,right=-2.75mm,scale=2] {8}
        }
    edge from parent node[draw=none,fill=none,left,scale=2] {2}
    } 
    child{ node{}
        child{ node{}
        edge from parent node[draw=none,fill=none,left=-2.75 mm,scale=2] {7}
        }
        child{ node{}
        edge from parent node[draw=none,fill=none,left=-3.25mm,scale=2] {9}
        }
        child{ node{}
        edge from parent node[draw=none,fill=none,right=-2.75mm,scale=2] {14}
        }
    edge from parent node[draw=none,fill=none,right,scale=2] {13}
    };

\tikzstyle{level 1}=[sibling distance=30mm]
\tikzstyle{level 2}=[sibling distance=10mm]

\node[draw=none,fill=none,above,scale=2] at (1.25,0) {$w_2$};
\node at (1.25,0) {}
    child{ node{}
        child{ node{}
        edge from parent node[draw=none,fill=none,left=-2.75mm,scale=2] {4}
        }
        child{ node{}
        edge from parent node[draw=none,fill=none,right=-2.7mm,scale=2] {11}
        }
    edge from parent node[draw=none,fill=none,left=-2.75mm,scale=2] {3}
    };

\node[draw=none,fill=none,above,scale=2] at (3.25,0) {$w_3$};
\node at (3.25,0) {}
    child{ node{}
        child{ node{}
        edge from parent node[draw=none,fill=none,left=-2.75mm,scale=2] {5}
        }
        child{ node{}
        edge from parent node[draw=none,fill=none,right=-2.75mm,scale=2] {10}
        }
    edge from parent node[draw=none,fill=none,left=-2.75mm,scale=2] {12}
    };

    \tikzstyle{level 1}=[sibling distance=20mm]
    \tikzstyle{level 2}=[sibling distance=7.5mm]
    \node[draw=none,fill=none,above,scale=2] at (9,0) {$w$};
    \node at (9,0) {}
    child{ node{}
        child{ node{} 
        edge from parent node[draw=none,fill=none,left=-2.75mm,scale=2] {1}
        }
        child{ node{}
        edge from parent node[draw=none,fill=none,left=-4.25mm,scale=2] {6}
        }
        child{ node{}
        edge from parent node[draw=none,fill=none,right=-2.75mm,scale=2] {8}
        }
    edge from parent node[draw=none,fill=none,left,scale=2] {2}
    } 
    child{ node{}
        child{ node{}
        edge from parent node[draw=none,fill=none,left=-2.75 mm,scale=2] {7}
        }
        child{ node{}
        edge from parent node[draw=none,fill=none,left=-4.25mm,scale=2] {9}
        }
        child{ node{}
        edge from parent node[draw=none,fill=none,right=-2.75mm,scale=2] {14}
        }
    edge from parent node[draw=none,fill=none,right=-2.75mm,scale=2] {13}
    }
    child{ node{}
        child{ node{}
        edge from parent node[draw=none,fill=none,left=-2.75mm,scale=2] {4}
        }
        child{ node{}
        edge from parent node[draw=none,fill=none,right=-3.27mm,scale=2] {11}
        }
    edge from parent node[draw=none,fill=none,right=-2.25mm,scale=2] {3}
    }
    child{ node{}
        child{ node{}
        edge from parent node[draw=none,fill=none,left=-2.75mm,scale=2] {5}
        }
        child{ node{}
        edge from parent node[draw=none,fill=none,right=-2.75mm,scale=2] {10}
        }
    edge from parent node[draw=none,fill=none,right=+0.5mm,scale=2] {12}
    };

    \node [draw=none,fill=none,scale=2.5] (F) at (0.625,2) {\(F\)};
    \node [draw=none,fill=none,scale=2.5] (T) at (9,2) {\(T\)};
    \draw [latex-latex] (F) -- (T);
    \node[draw=none,fill=none,above=-20mm,scale=1.5] at ($(F)!0.5!(T)$) {Combine \(F\) into \(T\) };
    \node[draw=none,fill=none,below=-30mm,scale=1.5] at ($(F)!0.5!(T)$) {Label \(T\) and split back to \(F\)};
\end{tikzpicture}
}
    \vspace*{-12mm}
    \caption{An example of Sub-case 2.2.2. The forest $F$ has 3 components, $T_1,T_2,$ and $T_3$. Root $T_1$ at the degree-2 vertex $w_1$, and root the other trees at a leaf. When labeling the combined tree $T$, the edges incident to $w_1$  get a $B$-set, so that it is the only vertex with sum 0 (mod 15).}
    \label{fig:case2.2.2}
\end{figure}
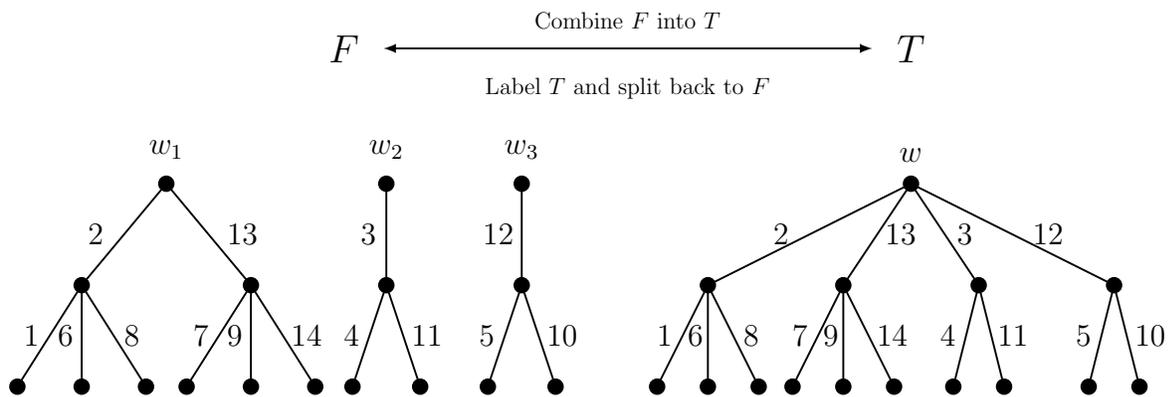

\section{Future Study}

In this article, we proved that every forest without $K_2$ as a component tree and having at most one vertex of degree-2 is antimagic. It would be interesting to investigate the forests where each component tree contains at most one degree-2 vertex.  

Suppose $G$ is a graph and $e=uv$ is an edge of $G$. A subdivision of $e$ is the operation of replacing $e=uv$ with a path $(u,w_e,v)$, where $w_e$ is a new vertex.
For a tree $T$, 
denote by $T^*$ the tree obtained by subdividing each edge in $T$.
The following result was proved in \cite{LIANG20149}. 

\begin{theorem} {\rm (\cite{LIANG20149})} 
If $T$ is a tree without degree-2 vertices, then $T^*$ is antimagic. 
\end{theorem}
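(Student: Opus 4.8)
The plan is to exploit a numerical coincidence peculiar to subdivisions. Write $n=|V(T)|$, so $T$ has $n-1$ edges and $T^{*}$ has $m=2(n-1)$ edges and $2n-1$ vertices; in particular $m$ is always even, and setting $M:=m+1=2n-1$ we have the key identity $|V(T^{*})|=M$. First I would dispose of the degenerate cases: if $T=K_{1}$ then $T^{*}=K_{1}$ is vacuously antimagic, and if $T=K_{2}$ then $T^{*}=P_{3}$, which is antimagic by inspection, so I may assume $T$ has at least one internal vertex. The structural observation driving everything is that $T^{*}$ is bipartite with parts $O$ (the original vertices, which keep their $T$-degrees and so have degree $1$ or $\ge 3$) and $S$ (the subdivision vertices); every subdivision vertex has degree exactly $2$, and every edge of $T^{*}$ joins $O$ to $S$. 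Thus the degree-$2$ vertices of $T^{*}$ are precisely the $n-1$ subdivision vertices --- far more than the single one allowed in \Cref{thm:newresult} --- which is exactly why a genuinely new argument is needed.

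Because $|V(T^{*})|=M$, my target is a bijection $f\colon E(T^{*})\to[1,m]$ whose vertex-sums form a \emph{complete residue system} modulo $M$: there are exactly $M$ vertices and $M$ residues, so pairwise-distinct residues are simultaneously sufficient for antimagicness and the most economical thing to aim for. To organize the sums I would root $T^{*}$ at an original leaf $r$ and orient from parents to children as in \Cref{def:root_trees}; then originals occupy the even levels and subdivision vertices the odd levels, and each subdivision vertex $w$ has a single incoming edge (label $x_{w}$) and a single outgoing edge (label $y_{w}$), with $\{x_{w}\}\sqcup\{y_{w}\}=[1,m]$. The three families of vertex-sums are then $\phi(r)=x_{w_{0}}$ for the root's child $w_{0}$; $\phi(v)=y_{w(v)}+\sum_{w'\text{ child of }v}x_{w'}$ for each non-root original $v$ with parent subdivision vertex $w(v)$; and $\phi(w)=x_{w}+y_{w}$ for each subdivision vertex $w$.

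The construction then reduces to choosing the data $(\{x_{w}\},\{y_{w}\},\text{pairing})$ to meet two requirements. The first is that every internal original vertex be zero-sum, $\sum_{w'\text{ child of }v}x_{w'}\equiv 0\pmod{M}$, which forces $\phi(v)\equiv y_{w(v)}\pmod{M}$ for the non-root originals, so that the $n$ originals (with the root contributing residue $x_{w_{0}}$) occupy the $n$ distinct residues $\{y_{w}\}\cup\{x_{w_{0}}\}$. The second is that the subdivision sums realize the complementary $n-1$ residues $\{0\}\cup(\{x_{w}\}\setminus\{x_{w_{0}}\})$, the residue-$0$ vertex necessarily being the unique $w$ with $x_{w}+y_{w}=M$ (this is the only multiple of $M$ a two-edge sum can attain). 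Here \Cref{cor:part} and the partition of \Cref{lem:zerosum} are the natural tools for the first requirement, but they partition an initial segment $[1,k]$, whereas the outgoing labels of the internal vertices form a \emph{chosen} subset $\{x_{w}\}\setminus\{x_{w_{0}}\}$ of $[1,m]$; so the zero-sum lemma guides, rather than directly supplies, the grouping.

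The main obstacle is precisely the coexistence of these two requirements under the degree-$2$ constraint. Since a degree-$2$ vertex can never be made zero-sum (its lone outgoing edge would have to carry a multiple of $M$), the subdivision vertices cannot be neutralized one at a time as the unique degree-$2$ vertex is in \Cref{prop}; instead the entire labeling must be tuned so that their $n-1$ sums collectively sweep out the leftover residue set. I would attack the bookkeeping by ordering the internal vertices from the leaves inward and assigning labels greedily: at each internal vertex reserve a zero-sum block of size $\deg_T(v)-1$ from the labels still free, reserve one pair summing to $M$ for the residue-$0$ vertex, and realize the remaining prescribed residues as a perfect matching between the unused in- and out-labels. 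Proving that this reservation never gets stuck --- for every admissible degree sequence with all internal degrees at least $3$ --- is the heart of the matter; the worked instances $T=K_{1,3}$ and $T=K_{1,4}$ already confirm that the residue system is attainable and calibrate the scheme. Should the residue argument balk for some degree sequences, a magnitude fallback is available: giving the smallest labels to the edges meeting leaves makes every leaf-sum strictly smaller than every other vertex-sum, leaving only the mutual separation of the subdivision and internal sums to be arranged.
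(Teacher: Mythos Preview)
The paper does not prove this theorem; it is quoted in Section~4 as a known result from \cite{LIANG20149}, with no argument supplied. There is therefore no in-paper proof to compare your attempt against.

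Assessing your proposal on its own merits: the setup is correct and the numerology $|V(T^{*})|=m+1=M$ is a genuinely useful observation, so aiming for a complete residue system modulo $M$ is a sensible target. But what you have written is a plan, not a proof, and you say as much yourself (``Proving that this reservation never gets stuck \ldots\ is the heart of the matter''). The substantive gap is that three constraints must be met simultaneously: (a) choose which $n-1$ labels serve as the $x_{w}$'s; (b) partition the $n-2$ of them lying below internal originals into zero-sum blocks of the prescribed sizes $\deg_{T}(v)-1\ge 2$; and (c) pair each $x_{w}$ with a $y_{w}$ so that the $n-1$ sums $x_{w}+y_{w}$ realize exactly the residue set $\{0\}\cup(\{x_{w}\}\setminus\{x_{w_{0}}\})$. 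Step~(b) is not covered by \Cref{lem:zerosum} or \Cref{cor:part} as stated, since those results partition an initial segment $[1,k]$, not an arbitrary $(n-2)$-element subset of $[1,m]$; and step~(c) is a matching problem whose feasibility you have not addressed at all. Your greedy scheme offers no mechanism guaranteeing that these constraints are compatible for every admissible degree sequence, and the ``magnitude fallback'' simply relocates the difficulty to separating subdivision sums from internal sums, which is the hard part once leaves are out of the way. Checking $K_{1,3}$ and $K_{1,4}$ calibrates nothing, since the interaction between (b) and (c) only becomes constraining when there are several internal vertices. Until (b) and (c) are actually carried out, the argument is incomplete.
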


Define $F^*$ in a similar way to $T^*$.
Suppose $F$ is a forest. 
Denote by $F^*$ the forest obtained by subdividing each edge in $F$. 

\medskip

\noindent\textbf{Question.} For any forest $F$ without a degree-2 vertex and without $K_2$ as a component, is it true that $F^*$ is antimagic?

\section*{Acknowledgments} 
We are grateful for the support of NSF PUMP (Preparing Undergraduates through Mentoring towards Ph.D.'s) grant. We would like to thank the anonymous referee for careful reading of the  manuscript with helpful comments.  




{\footnotesize

}
 
{\footnotesize  
\medskip
\medskip
\vspace*{1mm} 

\noindent {\it Johnny Sierra}\\  
California State University, Los Angeles\\
5151 State University Dr.\\
Los Angeles, CA 90032 \\
E-mail: {\tt jsierr29@calstatela.edu}\\ \\  

\noindent {\it Daphne Liu}\\  
California State University, Los Angeles\\
5151 State University Dr.\\
Los Angeles, CA 90032 \\
E-mail: {\tt dliu@calstatela.edu}\\ \\

\noindent {\it Jessica Toy}\\  
California State University, Los Angeles\\
5151 State University Dr.\\
Los Angeles, CA 90032 \\
E-mail: {\tt jtoy@calstatela.edu} \\ \\   

}


\vspace*{1mm}\noindent\footnotesize{\date{ {\bf Received}: April 31, 2023\;\;\;{\bf Accepted}: July 20, 2023}}\\
\vspace*{1mm}\noindent\footnotesize{\date{  {\bf\it PUMP Journal of  Undergraduate Research}}}

\end{document}